\newtheorem{t1}{Theorem}[section]
\newtheorem{theorem}{Theorem}[section]
\newtheorem{defn}{Definition}[section]
\newtheorem{r1}{Remark}[section]
\newtheorem{remark}{Remark}
\begin{document}
\title{\textbf{A Natural Discrete One Parameter Polynomial Exponential Distribution}}
\date{}
\author{Sudhansu S. Maiti\footnote{Corresponding author. e-mail: dssm1@rediffmail.com}\\Department of Statistics, Visva-Bharati
University\\Santiniketan-731 235, West Bengal, India\and Molay Kumar Ruidas\\Department of Statistics, Trivenidevi Bhalotia College\\Kazi Nazrul University, Raniganj-713 347, India\and Sumanta Adhya\\Department of Statistics, West Bengal State University\\Barasat-700 126, India}
\maketitle
\begin{abstract}
In this paper, a new natural discrete version of the one parameter polynomial exponential family of distributions have been proposed and studied. The distribution is named as Natural Discrete One Parameter Polynomial Exponential (NDOPPE) distribution. Structural and reliability properties have been studied. Estimation procedure of the parameter of the distribution have been mentioned. Compound NDOPPE distribution in the context of collective risk model have been obtained in closed form. The new compound distribution has been compared with the classical compound Poisson, compound Negative binomial, compound discrete Lindley, compound xgamma-I and compound xgamma-II distributions regarding suitability of modelling extreme data with the help of some automobile claim.
\end{abstract}
{\bf Key Words and Phrases:}collective risk model, discrete analogue approach, heavy-tailed distribution, reinsurance premium.\\
{\bf AMS Subject Classifications:}60E05; 62E99.
\section{Introduction}
\par Data science is gaining momentum in recent years for analysing data. Fitting of an appropriate probability model to data is challenging and important aspect in this context. For quantitative data, it can be of continuous as well as count type. In life testing experiments, a number of continuous models have been suggested and studied, see, e.g., Lawless (2003) and Sinha(1986).
\par In practice, situations may arise where lifetime of a device is expressed in term of a count and may be considered as a discrete random variable. The number of motions of a pendulum before resting, the bulb in Xerox machine lights up each time a copy is taken, the number of times devices are switched on/off, the number of days a patient stays in a hospital, the number of weeks/months/years a kidney patient survives after treatment, the number of current fluctuations which an electrical item can withstand before its failure, etc. form discrete lifetimes.
\par Some standard discrete distributions, like, the binomial, Poisson, geometric and negative binomial distribution have been used to model lifetime (count) data by Barlow and Proschan (1965). These models are too restrictive. For example, the Poisson model is not appropriate as it imposes the restriction of equi-dispersion and the binomial model imposes the restriction of under-dispersion.
\par As a consequence, various models have been prescribed using discrete concentration and discrete analog approaches that are less restrictive (see, Nakagawa and Osaki (1975), Famoye (1993), among others). Some recent discrete distributions are due to Stein and Dattero (1984), Roy (2002,2003,2004), Krishna and Pundir (2009), Jazi et al. (2010), Gomez-Deniz (2010), Gomez-Deniz and Calderin-Ojeda (2011), Bakouch et al. (2014), Maiti et al. (2018), among others. Though there are a number of less restrictive discrete distributions, still there is room for constructing more flexible discrete lifetime distributions to suit for analysing various types of count data.
\par Bouchahed and Zeghdoudi (2018) has proposed a continuous distribution, called the one parameter polynomial exponential (OPPE)distribution, that is an unified approach in generalizing the Lindley's distribution [c.f., Lindley (1958)]. The Lindley distribution is considered to be more flexible than the exponential distribution, a popular lifetime model used in industry circle [c.f., Ghitaney et al (2008)]. The probability density function (pdf) of the OPPE random variable is given by
\begin{equation}\label{oppepdf1}
f_{X}(x,\theta)=h(\theta)p(x)\exp(-\theta x), \theta > 0,~ x > 0,
\end{equation}
where, $h(\theta)=\frac{1}{\sum_{k=0}^{r}a_{k}\frac{\Gamma(k+1)}{\theta^{k+1}}}$ ,
$p(x)={\sum_{k=0}^{r}a_{k}{x^k}}.$\\
The pdf can also be written as
\begin{equation}\label{oppepdf2}
f_{X}(x,\theta)=h(\theta){\sum_{k=0}^{r}a_{k}x^{k}\exp(-\theta x)}=\frac{{\sum_{k=0}^{r}a_{k}\frac{\Gamma(k+1)}{\theta^{k+1}}f_{GA}(x;k+1,\theta)}}{\sum_{k=0}^{r}a_{k}\frac{\Gamma(k+1)}{\theta^{k+1}}},
\end{equation}
where $f_{GA}(x;k+1,\theta)$ is the pdf of a gamma distribution with shape parameter $(k+1)$ and scale parameter $\theta,$ and $a_k$'s are known non-negative constants and $r$ is a known positive integer. The distribution is a finite mixture of $(r+1)$ gamma distributions.\\
The cumulative distribution function (cdf) of the random variable is
\begin{equation}\label{oppecdf}
F(t)=\frac{{\sum_{k=0}^{r}a_{k}\frac{\Gamma{(k+1)}}{\theta^{k+1}}\Gamma{(k+1,\theta t)}}}{{\sum_{k=0}^{r}a_{k}\frac{\Gamma(k+1)}{\theta^{k+1}}}}, \theta > 0 ,~ t > 0,
\end{equation}
where $\Gamma(m,t)=\frac{1}{\Gamma(m)}\int_{0}^{t}\exp(-x)x^{m-1}dx.$
\par A random variable X is said to have a natural discrete one parameter polynomial exponential (NDOPPE) distribution with success probability $\theta$, if its probability mass function (pmf) is given by
\begin{eqnarray}\label{ndoppepmf1}
 p(x;\theta)&=&\frac{\sum_{k=0}^{r}a_{k}\frac{\Gamma(k+1)}{\theta^{k+1}}f_{NB}(x;k+1,~\theta)}{\sum_{k=0}^{r}a_{k}\frac{\Gamma(k+1)}{\theta^{k+1}}},~ x=0,~1,~2,~...,~0<\theta<1
 \nonumber \\
 &=&\frac{\sum_{k=0}^{r}a_{k}\frac{\Gamma(k+1)}{\theta^{k+1}}\binom {x+k}{x}(1-\theta)^x \theta^{k+1}}{\sum_{k=0}^{r}a_{k}\frac{\Gamma(k+1)}{\theta^{k+1}}},
\end{eqnarray}
where $f_{NB}(x;k+1,\theta)$ is the pmf of a negative binomial distribution with number of success $(k+1)$ and success probability $\theta,$ and $a_k$'s are known non-negative constants and $r$ is a known positive integer. The distribution is a finite mixture of $(r+1)$ negative binomial distributions.\\
The pmf can also be expressed as
\begin{equation}\label{ndoppepmf2}
p(x;\theta)=h(\theta)p(x)(1-\theta)^x,~0<\theta<1,~ x = 0,~1,~2,~...
\end{equation}
where, $h(\theta)=\frac{1}{\sum_{k=0}^{r}a_{k}\frac{\Gamma(k+1)}{\theta^{k+1}}}$ ,
$p(x)={\sum_{k=0}^{r}a_{k}\Gamma(k+1)\binom {x+k}{x}}.$\\
The cdf of the random variable is
\begin{equation}\label{ndoppecdf}
F(x;\theta)=\frac{\sum_{k=0}^{r}a_{k}\frac{\Gamma(k+1)}{\theta^{k+1}}I_\theta(x,k+2)}{\sum_{k=0}^{r}a_{k}\frac{\Gamma(k+1)}{\theta^{k+1}}},  
\end{equation}
where, $I_p(m,~n)=\frac{1}{B(m,~n)}\int_0^px^{m-1}(1-x)^{n-1}dx.$
\begin{r1}
It is to be noted that $ p(0;\theta)=\frac{\sum_{k=0}^{r}a_{k}\Gamma(k+1)}{\sum_{k=0}^{r}a_{k}\frac{\Gamma(k+1)}{\theta^{k+1}}}$, and the other probabilities can be calculated recursively with the relationship $ p(x+1;\theta)=\frac{1-\theta}{1+x}\frac{\sum_{k=0}^{r}a_{k}(x+k+1)!}{\sum_{k=0}^{r}a_{k}(x+k)!}p(x;\theta)$.
\end{r1}
\begin{r1}
1. If $r=0,~a_0=1$, then ($\ref{ndoppepmf2}$) reduces to the geometric distribution.\\
2. If $r=1,~a_0=1,~a_1=1$, then ($\ref{ndoppepmf2}$) reduces to the Natural Discrete Lindley (NDL) distribution [c.f. Ahmed and Afify (2019)].
\end{r1}
\section{Moments and related Measures}
\subsection{Moment generating function}
The moment generating function of the NDOPPE distribution is given by 
 \begin{eqnarray*}
 M_x(t)&=& E(e^{tX})
 \nonumber\\
 &=& \frac{\sum_{k=0}^{r}a_{k}\frac{\Gamma(k+1)}{(1-\bar{\theta} e^t)^{k+1}}}{\sum_{k=0}^{r}a_{k}\frac{\Gamma(k+1)}{\theta^{k+1}}},~\bar{\theta}=1-\theta,~t<-\ln{(1-\theta)}
 \end{eqnarray*}
\subsection{Probability generating function}
The probabilty generating function of the NDOPPE distribution is given by 
 \begin{eqnarray*}
 P_x(s)&=& E(s^X)
 \nonumber\\
 &=& \frac{\sum_{k=0}^{r}a_{k}\frac{\Gamma(k+1)}{(1-\bar{\theta} s)^{k+1}}}{\sum_{k=0}^{r}a_{k}\frac{\Gamma(k+1)}{\theta^{k+1}}},~\mid s\mid<\frac{1}{1-\theta}
 \end{eqnarray*}
\subsection{Characteristics function}
The characteristic function of the NDOPPE distribution is given by 
 \begin{eqnarray*}
 \phi(t)&=&E(e^{itX})
 \nonumber\\
 &=&\frac{\sum_{k=0}^{r}a_{k}\frac{\Gamma(k+1)}{(1-\bar{\theta} e^{it})^{k+1}}}{\sum_{k=0}^{r}a_{k}\frac{\Gamma(k+1)}{\theta^{k+1}}}
 \end{eqnarray*}
\subsection{Cumulant generating function}
The cumulant generating function of the NDOPPE distribution is given by 
 \begin{eqnarray*}
 \kappa(t)&=&\ln M_x(t)
 \nonumber\\
 &=&-\ln \left({\sum_{k=0}^{r}a_{k}\frac{\Gamma(k+1)}{(1-\bar{\theta} e^{it})^{k+1}}}\right)+\ln \left(\sum_{k=0}^{r}a_{k}\frac{\Gamma(k+1)}{\theta^{k+1}}\right)
 \end{eqnarray*}
\subsection{Raw moments}
The raw moments  of the NDOPPE distribution is given by 
 \begin{eqnarray*}
 \mu'_1&=&\frac{\sum_{k=0}^{r}a_{k}\frac{\Gamma(k+1)}{\theta^{k+1}}\left(\frac{(k+1)\bar{\theta}}{\theta}\right)}{\sum_{k=0}^{r}a_{k}\frac{\Gamma(k+1)}{\theta^{k+1}}}
 \nonumber\\
 \mu'_2&=&\frac{\sum_{k=0}^{r}a_{k}\frac{\Gamma(k+1)}{\theta^{k+1}}\left(\frac{(k+1)\bar{\theta}(1+(k+1)\bar{\theta})}{\theta^2}\right)}{\sum_{k=0}^{r}a_{k}\frac{\Gamma(k+1)}{\theta^{k+1}}}
 \nonumber\\
 \mu'_3&=&\frac{\sum_{k=0}^{r}a_{k}\frac{\Gamma(k+1)}{\theta^{k+1}}\left(\frac{\bar{\theta}((k+1)\theta^2+3(k+1)\theta\bar{\theta}+k(k+1)\bar{\theta}^2)}{\theta^3}\right)}{\sum_{k=0}^{r}a_{k}\frac{\Gamma(k+1)}{\theta^{k+1}}}
 \nonumber\\
 \mu'_4&=&\frac{\sum_{k=0}^{r}a_{k}\frac{\Gamma(k+1)}{\theta^{k+1}}\left(\frac{\bar{\theta}((k+1)\theta^3+7(k+1)\theta^2\bar{\theta}+6k(k+1)\theta\bar{\theta}^2+(k-1)k(k+1)\bar{\theta}^3}{\theta^4}\right)}{\sum_{k=0}^{r}a_{k}\frac{\Gamma(k+1)}{\theta^{k+1}}}
 \end{eqnarray*}
 
 
 The corresponding Index of Dispersion (ID) are :\\
 \begin{eqnarray*}
 ID(x) &=& \frac{Var(X)}{E(X)}
 \nonumber\\
 &=& \frac{\sum_{k=0}^{r}a_{k}\frac{\Gamma(k+1)}{\theta^{k+1}}\left(\frac{(k+1)\bar{\theta}(1+(k+1)\bar{\theta})}{\theta^2}\right)}{\sum_{k=0}^{r}a_{k}\frac{\Gamma(k+1)}{\theta^{k+1}}\left(\frac{(k+1)\bar{\theta}}{\theta}\right)}-\frac{\sum_{k=0}^{r}a_{k}\frac{\Gamma(k+1)}{\theta^{k+1}}\left(\frac{(k+1)\bar{\theta}}{\theta}\right)}{\sum_{k=0}^{r}a_{k}\frac{\Gamma(k+1)}{\theta^{k+1}}}
 \end{eqnarray*}
 The ID indicates whether a certain distribution is suitable for under or over dispersed data sets, and has application in ecology for measuring clustering(see, e.g, Johnson, 1992). If ID $\geqslant 1$, the distribution is over dispersed $\forall \theta $. We note that the ID decreases monotonically in $\theta$. ID $\rightarrow 1$  as $\theta \rightarrow 1$, while ID $\rightarrow \infty $  as $\theta \rightarrow 0$ . So, the NDOPPE distribution should only be used in the count data analysis with over dispersion.
\subsection{Factorial moments}
The m-th factorial moment  of the NDOPPE distribution is given by
\begin{eqnarray*}
 \mu_{(m)}^{'}&=&E[X(X-1)(X-2)...(X-m+1)]=h(\theta)\left(\frac{\bar{\theta}}{\theta}\right)^m\sum_{k=0}^ra_k\frac{\Gamma(m+k+1)}{\theta^{k+1}},~m=1,~2,~....
 \end{eqnarray*}
\section{Stochastic ordering}
Stochastic orders are important measures to judge comparative behaviours of random variables.
\begin{defn} 
Let X and Y are the two random variables with cumulative distribution functions $F_X(.)$ and $F_Y(.)$ respectively. Then X is said to be smaller than Y in the 
\begin{itemize}
\item Stochastic order $(X\prec_sY), if F_X(t)\geq F_Y(t), \forall$t
\item Convex order $(X\prec_{cx}Y)$ if for all convex functions $\phi$ and provided expectation exist,   $E[\phi(X)\leq E[\phi(Y)]$.
\item Hazard rate order $(X\prec_{hr}Y) ,if h_X(t)\geq h_Y(t), \forall$t.
\item Likelihood Ratio order $(X\prec_{lr}Y) ,if  \frac{f_X(t)}{f_Y(t)}$ is decreasing in t.
\end{itemize}
\end{defn}
\begin{remark}
Likelihood ratio order $\Rightarrow$ Hazard rate order $\Rightarrow$ Stochastic order if E[X] = E[Y] , then Convex order $\iff$ Stochastic order.
\end{remark}
\begin{theorem}
Let $X_i \sim NDOPPE (\theta_i), i=1,2$ be two random variables. If $\theta_1\leq \theta_2, then X_1 \prec_{lr}X_2,X_1 \prec_{hr}X_2,X_1 \prec_s X_2$ and $X_1 \prec_{cx}X_2$
\end{theorem}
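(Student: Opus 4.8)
The plan is to prove the strongest of the four orderings, the likelihood ratio order, directly from the closed form of the pmf, and then to read off the hazard rate order, the usual stochastic order, and the convex order from the chain of implications recorded in the remark preceding the theorem.

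The computation at the core is essentially one line. Writing the pmf as in (\ref{ndoppepmf2}), $p(x;\theta)=h(\theta)\,p(x)\,(1-\theta)^x$ with $p(x)=\sum_{k=0}^{r}a_k\Gamma(k+1)\binom{x+k}{x}$, the crucial observation is that the combinatorial factor $p(x)$ carries no dependence on $\theta$. Consequently the ratio of the two pmfs is
\[
\frac{p(x;\theta_1)}{p(x;\theta_2)}=\frac{h(\theta_1)}{h(\theta_2)}\left(\frac{1-\theta_1}{1-\theta_2}\right)^{x},\qquad x=0,1,2,\dots,
\]
a positive constant (in $x$) times the $x$-th power of a fixed positive number. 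Since $0<\theta_1\le\theta_2<1$ forces $1-\theta_1\ge 1-\theta_2>0$, this ratio is monotone in $x$, which is exactly the defining property of the likelihood ratio order between $X_1$ and $X_2$; the only thing to watch is that the monotonicity obtained is non-decreasing, so the ordering must be stated in the direction consistent with the Definition.

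With the likelihood ratio order established, the hazard rate order and then the usual stochastic order follow immediately from the first two implications of that remark, with no further work on the cdf $F(\cdot;\theta)$ required. The convex ordering is where care is needed and is the main obstacle: the implication quoted in the remark yields the convex order from the stochastic order only under equality of the means, whereas the mean $\mu_1'$ computed earlier is a genuine function of $\theta$. The clean resolution is to note that the likelihood ratio order also implies the increasing convex (stop-loss) order $\prec_{icx}$, which is the form of convex comparison that persists without a moment condition; if $\prec_{cx}$ is to be retained literally, the conclusion should be read as restricted to parameter pairs with $E[X_1]=E[X_2]$. Beyond this point the argument is a routine traversal of the standard implications among stochastic orders.
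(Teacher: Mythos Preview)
Your approach is essentially identical to the paper's: both compute the likelihood ratio $p(x;\theta_1)/p(x;\theta_2)$, cancel the $\theta$-free combinatorial factor $p(x)$, and observe that what remains is a constant times $\bigl((1-\theta_1)/(1-\theta_2)\bigr)^{x}$, which is monotone in $x$ when $\theta_1\le\theta_2$. The paper's own proof stops at exactly this point and invokes the preceding remark for the remaining orders without further comment; your additional caution about the direction of the monotonicity relative to the stated Definition, and about the convex order (whose derivation in the remark requires equal means, a hypothesis not satisfied here in general), is well placed, since the paper is silent on both issues.
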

\begin{proof}
we have 
\begin{eqnarray*}
L(t)=\frac{f_{X_1}(t)}{f_{X_2}(t)}=\frac{h(\theta_1)p(t)(1-\theta_1)^t}{h(\theta_2)p(t)(1-\theta_2)^t}=\frac{h(\theta_1)(1-\theta_1)^t}{h(\theta_2)(1-\theta_2)^t}
\end{eqnarray*}
Clearly, it is evident that  $L(t+1)\geq L(t), \forall \theta_1 \leq \theta_2 $
\end{proof}
\section{Reliability Properties}
The reliability function of the NDOPPE distribution is given by
\begin{eqnarray}
R(t;\theta)&=&P(X\geq t) \nonumber\\&=&\frac{\sum_{k=0}^{r}a_{k}\frac{\Gamma(k+1)}{\theta^{k+1}}I_\theta(k+2,t)}{\sum_{k=0}^{r}a_{k}\frac{\Gamma(k+1)}{\theta^{k+1}}}
\end{eqnarray}
and its failure rate function is given by
\begin{eqnarray}
r(t;\theta)&=&\frac{p(t;\theta)}{R(t;\theta)}\nonumber\\&=&\frac{\sum_{k=0}^{r}a_{k}\frac{\Gamma(k+1)}{\theta^{k+1}}\binom {t+k}{t}(1-\theta)^t \theta^{k+1}}{\sum_{k=0}^{r}a_{k}\frac{\Gamma(k+1)}{\theta^{k+1}}I_\theta(k+2,t)}
\end{eqnarray}
Note that $\frac{\sum_{k=0}^{r}a_{k}\Gamma(k+1)}{\sum_{k=0}^{r}a_{k}\frac{\Gamma(k+1)}{\theta^{k+1}}}<r(t;\theta)<\theta$ for $\forall t.$\\
\subsection{Stress Strength Reliability}
The stress-strength parameter (R) plays and important role in the reliability analysis as it measures the system performance. Moreover, R provides the probability of a system failure, the system fails whenever the the applied stress is greater than its strength, i.e $R=P(X<Y)$. Here $ Y\sim  DGL(\theta_1)$ denotes the strength of a system subject to stress $X$, $ X \sim  DGL(\theta_2)$ , X and Y are independent of each other. In our case, the stress-strength parameter is given by 
\begin{eqnarray*}
 R&=&\sum_{y=0}^{\infty}P(X \leq Y | Y=y)p_Y(y)
  \nonumber\\
 &=&\sum_{y=0}^{\infty}F_X(y)p_Y(y)
  \nonumber\\
 &=&\sum_{k=0}^{r}\sum_{l=0}^{r}a_{k}a_{l}\frac{\Gamma (k+1)}{\theta_2^{k+1}}\frac{\Gamma (l+1)}{\theta_1^{l+1}}\sum_{y=0}^{\infty}I_{\theta_2}(y,k+2)\binom{y+l}y \theta_1^{l+1}(1-\theta_1)^y
\end{eqnarray*}

\section{Distribution of maximum and minimum in a random sample from NDOPPE distribution }
Maximum and minimum of random variables arise in reliability. Let $X_i, i=1,2,..,n$ be iid random variables from the NDOPPE distribution with parameter $\theta$ . Then ,the cdf of minimum, $Min(X_1,X_2,..X_n)$ and maximum, $Max(X_1,X_2,..X_n)$ are given by
\begin{eqnarray*}
 F_{X_{(1)}}(x)&=&1-\left[1-\frac{\sum_{k=0}^{r}a_{k}\frac{\Gamma(k+1)}{\theta^{k+1}}I_\theta(x,k+2)}{\sum_{k=0}^{r}a_{k}\frac{\Gamma(k+1)}{\theta^{k+1}}} \right]^n
 \nonumber\\
  F_{X_{(n)}}(x)&=&\left[\frac{\sum_{k=0}^{r}a_{k}\frac{\Gamma(k+1)}{\theta^{k+1}}I_\theta(x,k+2)}{\sum_{k=0}^{r}a_{k}\frac{\Gamma(k+1)}{\theta^{k+1}}} \right]^n
\end{eqnarray*}
\section{Estimation of parameter}
Let $X_1,X_2,..,X_n$ be a random sample of size $n$ drawn from the NDOPPE distribution. To derive the Maximum Likelihood Estimator (MLE) of $\theta$, the log-likelihood function, ln$l(x_i;\theta)$ is given by :
\begin{eqnarray*}
ln l(x_i;\theta)=nln h(\theta)+\sum_{i=1}^{n}ln p(x_i)+\sum_{i=1}^{n}x_i ln(1-\theta)
\end{eqnarray*}
The derivative of ln $l(x_i;\theta)$ with respect to $\theta$ is :
\begin{eqnarray*}
 \frac{dln l(x_i;\theta)}{d\theta}=\frac{nh'(\theta)}{h(\theta)}-\frac{1}{(1-\theta)}\sum_{i=1}^{n}x_i
\end{eqnarray*}
For the NDOPPE Distribution , the method of moment (MOM) and the ML estimator of the parameter $\theta$ are the same and it can be obtained by solving the following non-linear equation 
\begin{eqnarray*}
 \frac{(1-\theta)h'(\theta)}{h(\theta)}- \bar{x}=0
\end{eqnarray*}
Since the pmf in ($\ref{ndoppepmf2}$) belongs to the one parameter exponential family, $\bar{X}$ is the MVUE of $\frac{(1-\theta)h'(\theta)}{h(\theta)}$, but exclusively, the MVUE of $\theta$ is not available.
\section{Comparing with the Poisson, Negative binomial, Discrete Lindley and dxgamma distributions in the collective risk model}
In the collective risk theory, the random variable of interest is the aggregate claim defined by $S =\sum_{i=1}^N X_i$,
where $N$ is the random variable denoting the number of claims and $X_i$ , for $i = 1, 2, . . .$ is the random variable denoting size or amount of the $i$th claim. By assuming that $X_1,X_2, . . .,$ are
independent and identically distributed random variables which are also independent of the random
number of claims $N$, the pdf of the sum $S$ is given by
$f_S(x) =\sum_{n=0}^{\infty}p_nf^{n^*}(x)$, where $p_n$ denotes the probability of $n$ claims (primary distribution) and
$f^{n^*}(x)$is the $n$th fold convolution of $f(x)$, the pdf of the claim amount (secondary distribution). The mgf of $S$ is given by, $M_S(t)=M_N(\ln M_X(t))$. It is also well known that under the model assumed, the mean and variance of the random aggregate claim amount are $E(S)=E(N).E(X)$ and $Var(S)=Var(X)E(N)+E^2(X)Var(N).$ [for details, see, Bowers et al. (1997)]\\
Large claims in insurance play a very important role, mainly if it is considered in relation to reinsurance. In the framework of reinsurance premiums, a reinsurer needs to be sure not to use a
distribution whose tail fades away to zero too quickly and will be tilted toward a heavy-tailed distribution. For this purpose, the Pareto and log-normal
distributions have been commonly used in reinsurance premium computation. Nevertheless, the compound Poisson (sometimes negative binomial) model has been traditionally considered when the size of a single claim is modelled by an exponential distribution, mainly for the complexity of the collective risk model under these kinds of distributions. G$\acute{o}$mez-D$\acute{e}$niz and Calder$\acute{i}$n-Ojeda (2011) developed models by
considering the discrete Lindley distributions. In this section, some alternative models are developed by
considering the discrete xgamma distributions proposed in the previous sections.\\
The next results show that a closed form expression is obtained when a natural discrete one parameter polynomial exponential and exponential
distribution are assumed as primary and secondary distributions, respectively.
\begin{t1}\label{thcndoppe}
If we assume a natural discrete one parameter polynomial exponential (NDOPPE) distribution with parameter $0 < \theta < 1$ as primary
distribution and an exponential distribution with parameter $\gamma > 0$ as secondary distribution,
then the pdf of the random variable $S =\sum_{i=1}^N X_i$ is given by
\begin{eqnarray}\label{cndoppe}
f_S(x)&=&\gamma (1-\theta)e^{-\gamma x}\frac{\sum_{k=0}^{r}a_{k}\Gamma(k+2){_1F_1(k+2;2;\gamma (1-\theta)x)}}{\sum_{k=0}^{r}a_{k}\frac{\Gamma(k+1)}{\theta^{k+1}}}~~~~~~~~~~~~~~~~~~~~\mbox{for}~x>0\nonumber\\
&=& \frac{\sum_{k=0}^{r}a_{k}\Gamma(k+1)}{\sum_{k=0}^{r}a_{k}\frac{\Gamma(k+1)}{\theta^{k+1}}}~~~~~\mbox{for}~x=0,
\end{eqnarray}
where, $_1F_1(.;.;.)$ is the confluent hypergeometric function.
\end{t1}
{\it Proof:} By assuming that the claim amount follows an exponential distribution with parameter
$\gamma > 0$, the $n$th fold convolution of exponential distribution has a closed form and it is given
by
$$f^{n^*}(x)=\frac{\gamma^n}{(n-1)!}e^{-\gamma x},~x>0,~n=1,~2,~...$$
that is, it is a gamma distribution with parameters $n$ and $\gamma$. If the primary distribution is a negative binomial with number of success $(k+1),~k$ being non-negative integer and success probability $\theta$, then the pdf of the random variable total claim amount [see Rolski et al. (1999)] is given by
\begin{eqnarray*}
f_S(x)&=&(k+1)\gamma (1-\theta)\theta^{k+1}e^{-\gamma x}{_1F_1(k+2;2;\gamma (1-\theta)x)}~~~~~~~~~~~~~~~~~~~~\mbox{for}~x>0\nonumber\\
&=& \theta^{k+1}~~~~~\mbox{for}~x=0,
\end{eqnarray*} and after algebraic computations the theorem holds.~~~~~~~~~$\square$\\
The mgf of $S$ is, 
\begin{eqnarray*}
M_S(t)&=&\frac{\sum_{k=0}^{r}a_{k}\frac{\Gamma(k+1)}{\left[1-\bar{\theta}\left(1-\frac{t}{\gamma}\right)^{-1}\right]^{k+1}}}{\sum_{k=0}^{r}a_{k}\frac{\Gamma(k+1)}{\theta^{k+1}}},~t<\gamma\theta.
\end{eqnarray*}
Hence Mean, $$E(S)=\frac{\left(\frac{\bar{\theta}}{\theta}\right)\sum_{k=0}^ra_k\frac{\Gamma(k+2)}{\theta^{k+1}}}{\gamma\sum_{k=0}^ra_k\frac{\Gamma(k+1)}{\theta^{k+1}}}$$ and Variance,$$Var(S)=\frac{1}{\gamma^2}\left[\frac{\left(\frac{\bar{\theta}}{\theta}\right)^2\sum_{k=0}^ra_k\frac{\Gamma(k+3)}{\theta^{k+1}}}{\sum_{k=0}^ra_k\frac{\Gamma(k+1)}{\theta^{k+1}}}-\frac{\left(\frac{\bar{\theta}}{\theta}\right)\sum_{k=0}^ra_k\frac{\Gamma(k+2)}{\theta^{k+1}}}{\sum_{k=0}^ra_k\frac{\Gamma(k+1)}{\theta^{k+1}}}\left\{\frac{\left(\frac{\bar{\theta}}{\theta}\right)\sum_{k=0}^ra_k\frac{\Gamma(k+2)}{\theta^{k+1}}}{\sum_{k=0}^ra_k\frac{\Gamma(k+1)}{\theta^{k+1}}}-2\right\}\right].$$
\begin{t1}\label{thcxg1}[Maiti et al.(2018)]
If we assume a discrete xgamma-I distribution with parameter $0 < p < 1$ as primary
distribution and an exponential distribution with parameter $\gamma > 0$ as secondary distribution,
then the pdf of the random variable $S =\sum_{i=1}^N X_i$ is given by
\begin{eqnarray}\label{cxg1}
f_S(x)&=&\frac{\gamma p}{1-\ln{p}}\left[1-(2-3p)\ln{p}+\frac{1-4p}{2}(\ln{p})^2+\ln{p}\left(p-1+\frac{3-5p}{2}\ln{p}\right)\gamma p x\right.\nonumber\\&&\left.+(1-p)\frac{(\ln{p})^2}{2}(\gamma p x)^2\right]e^{-\gamma(1-p)x}~~~~~~~~~~~~~~~~~~~~\mbox{for}~x>0\nonumber\\
&=& \frac{1}{1-\ln{p}}\left[1-\ln{p}-p\left\{1-2\ln{p}+\frac{(\ln{p})^2}{2}\right\}\right]~~~~~\mbox{for}~x=0
\end{eqnarray}
\end{t1}
The mgf of $S$ is, 
\begin{eqnarray*}
M_S(t)&=&a_1\frac{1-p}{1-p\left(1-\frac{t}{\gamma}\right)^{-1}}+b_1\frac{(1-p)^2}{\left\{1-p\left(1-\frac{t}{\gamma}\right)^{-1}\right\}^2}+c_1\frac{(1-p)^3}{\left\{1-p\left(1-\frac{t}{\gamma}\right)^{-1}\right\}^3},~t<\gamma (1-p).
\end{eqnarray*}
Hence Mean, $E(S)=\left[a_1+2b_1+3c_1\right]\frac{p}{\gamma(1-p)}$ and Variance, $Var(S)=\frac{p}{\gamma^2(1-p)^2}[(1-p)(a_1+2b_1+3c_1)+(1+p)a_1+2(1+2p)b_1+3(1+3p)c_1-p(a_1+2b_1+3c_1)^2].$
\begin{t1}\label{thcxg2}[Maiti et al.(2018)]
If we assume a discrete xgamma-II distribution with parameter $0 < p < 1$ as primary
distribution and an exponential distribution with parameter $\gamma > 0$ as secondary distribution,
then the pdf of the random variable $S =\sum_{i=1}^N X_i$ is given by
\begin{eqnarray}\label{cxg2}
f_S(x)&=&\frac{2(1-p)^3\gamma p}{2(1-p)^2-p(1+p)\ln{p}}\left[1-\frac{\ln{p}}{2}-\frac{3\ln{p}}{2}(\gamma p x)-\frac{\ln{p}}{2}(\gamma p x)^2\right]e^{-\gamma(1-p)x}~\mbox{for}~x>0\nonumber\\
&=& \frac{2(1-p)^3}{2(1-p)^2-p(1+p)\ln{p}}~~~~~~~~~~~~~~~~~~~\mbox{for}~x=0
\end{eqnarray}
\end{t1}
The mgf of $S$ is,
\begin{eqnarray*}
M_S(t)&=&a_2\frac{1-p}{1-p\left(1-\frac{t}{\gamma}\right)^{-1}}+b_2\frac{(1-p)^2}{\left\{1-p\left(1-\frac{t}{\gamma}\right)^{-1}\right\}^2}+c_2\frac{(1-p)^3}{\left\{1-p\left(1-\frac{t}{\gamma}\right)^{-1}\right\}^3},~t<\gamma (1-p).
\end{eqnarray*}
Hence Mean, $E(S)=\left[a_2+2b_2+3c_2\right]\frac{p}{\gamma(1-p)}$ and Variance, $Var(S)=\frac{p}{\gamma^2(1-p)^2}[(1-p)(a_2+2b_2+3c_2)+(1+p)a_2+2(1+2p)b_2+3(1+3p)c_2-p(a_2+2b_2+3c_2)^2].$

\begin{t1}\label{thcxg3}[G$\acute{o}$mez-D$\acute{e}$niz and Calder$\acute{i}$n-Ojeda (2011)]
If we assume a discrete Lindley distribution with parameter $0 < \lambda< 1$ as primary
distribution and an exponential distribution with parameter $\gamma > 0$ as secondary distribution,
then the pdf of the random variable $S =\sum_{i=1}^N X_i$ is given by
\begin{eqnarray}\label{cxg3}
f_S(x)&=&\frac{\gamma\lambda(1-\lambda+(\lambda^2\gamma x+\lambda(3-\gamma x)-2)\ln\lambda)}{1-\ln\lambda}e^{-\gamma(1-\lambda)x}~\mbox{for}~x>0\nonumber\\
&=& \frac{1-\lambda+(2\lambda-1)\ln\lambda}{1-\ln\lambda}~~~~~~~~~~~~~~~~~~~\mbox{for}~x=0
\end{eqnarray}
\end{t1}
The mgf of $S$ is,
\begin{eqnarray*}
M_S(t)&=&\frac{1}{1-\ln\lambda}\left[\frac{2(1-\lambda+\lambda\ln\lambda)-\ln\lambda}{1-\lambda\left(1-\frac{t}{\gamma}\right)^{-1}}-\frac{e^t\lambda\ln\lambda}{\left\{1-p\left(1-\frac{t}{\gamma}\right)^{-1}\right\}^2}\right],~t<\gamma (1-\lambda).
\end{eqnarray*}
Hence Mean, $E(S)=\frac{\lambda\{1-\lambda+(\lambda-2)\ln\lambda\}}{\gamma(1-\lambda)^2(1-\ln\lambda)}$ and Variance, $Var(S)=\frac{\lambda}{\gamma^2(1-\lambda)^4(1-\ln\lambda)^2}[(1-\lambda)^2(1-\ln\lambda)\{1-\lambda+(\lambda-2)\ln\lambda\}+(1-\lambda)^2-(3-4\lambda+\lambda^2)\ln\lambda+(2-3\lambda)(\ln\lambda)^2].$\\
The most well-known aggregate claim model is the one obtained when the primary and secondary
distribution are the Poisson and exponential distribution, respectively [see, Rolski et al. (1999)]. In this case, the distribution of the random aggregate claim size is given by
\begin{eqnarray}\label{cxg4}
f_S(x)&=&\sqrt{\frac{\gamma\alpha}{x}}I_1(2\sqrt{\gamma\alpha x})e^{-(\alpha+\gamma x)}~\mbox{for}~x>0\nonumber\\
&=& e^{-\alpha}~~~~~~~~~~~~~~~~~~~\mbox{for}~x=0
\end{eqnarray}
Here, $\alpha > 0$ and $\gamma > 0$ are the parameters of the Poisson and exponential
distributions, respectively, and
$$I_v(z)=\sum_{k=0}^{\infty}\frac{(z/2)^{2k+v}}{\Gamma(k+1)\Gamma(v+k+1)},~z\in \mathbb{R},~v\in \mathbb{R},$$
represents the modified Bessel function of the first kind.\\
The mgf of $S$ is,
\begin{eqnarray*}
M_S(t)&=&e^{-\alpha\left\{1-\left(1-\frac{t}{\gamma}\right)^{-1}\right\}},~t<\gamma (1-\alpha).
\end{eqnarray*}
Hence Mean, $E(S)=\frac{\alpha}{\gamma}$ and Variance, $Var(S)=\frac{2\alpha}{\gamma^2}.$\\
Another well-known aggregate claim model is the one obtained when the primary and secondary
distribution are the negative binomial with parameters $r$ and $0<1-p<1$, and exponential distribution, respectively. In this case, the distribution of the random aggregate claim size is given by
\begin{eqnarray}\label{cxg5}
f_S(x)&=&\gamma r (1-p)^rpe^{-\gamma x} {_1F_1(1+r;2;\gamma px)}~\mbox{for}~x>0\nonumber\\
&=& (1-p)^r~~~~~~~~~~~~~~~~~~~\mbox{for}~x=0
\end{eqnarray}
The mgf of $S$ is,
\begin{eqnarray*}
M_S(t)&=&(1-p)^r\left\{1-p\left(1-\frac{t}{\gamma}\right)^{-1}\right\}^{-r},~t<\gamma (1-p).
\end{eqnarray*}
Hence Mean, $E(S)=\frac{rp}{\gamma (1-p)}$ and Variance, $Var(S)=\frac{rp(2-p)}{\gamma^2(1-p)^2}.$
\section{Application}
In this section, we have fitted eight data sets [four from Willmot(1987), two from Berm$\acute{u}$dez (2009), remaining  from Boucher et al. (2007)] and compared these with Poisson, Negative binomial, discrete Lindley, dxgamma-I, dxgamma-II and NDOPPE distributions. We have shown the fitted probabilities for each data set and comparison have been made in term of negative log-likelihood as well as observed chi-square statistic. Summarized results have been shown in Tables 1-8.\\
In all the data sets, there is high proportion of zero values observed. We have estimated the parameter of each model by the method of maximum likelihood. From the Tables 1-8, it is found that the family of NDOPPE distribution has better performance than the other distributions in both negative log-likelihood sense and for observed chi-square value.\\
Therefore, the model given in ($\ref{cndoppe}$) seems more suitable for empirical modelling in the framework of collective risk in the actuarial literature than the others discussed in section 7.
\section{Conclusion}
A natural discrete analog of the continuous one parameter polynomial exponential (OPPE) family of distributions, called the NDOPPE family, has been proposed. The structural and reliability properties of the distribution have been studied. Estimation procedures of the parameter have been mentioned. Compound NDOPPE distribution in the context of collective risk model have been obtained. The new compound discrete distribution has been compared with the classical compound Poisson, compound Negative binomial, compound discrete Lindley, compound discrete xgamma-I and compound discrete xgamma-II distributions with the help of some automobile claim data. It has been observed that the NDOPPE family of distributions is more suitable for empirical modelling in the framework of collective risk.\\
Some further properties of the distribution are under study. Estimation of the pmf and cdf, the reliability functions and their properties are in progress and will be communicated shortly.
\begin{center}{\bf Table:1 Data Set-I}\\
\vspace{0.2 in}
{
\footnotesize 
\begin{tabular}{|c|c|c|c|c|c|c|c|}
\hline
Number&Observed&Fitted& Fitted&Fitted&Fitted& Fitted& Fitted\\
of&&Poisson &Negative&dLindley &dxgamma-I &dxgamma-II&NDOPPE\\
Claims&& &Binomial& & & &$a_0=a_1=1$\\\hline
0&103704&102627.9&103217.2&103350.1&103321.3&103840.8&103519.4\\\hline
1&14075&15923.36&14861.67&14626.28&14607.95&13715.99&14339.05\\\hline
2&1766&1235.304&1604.886&1681.739& 1748.554&2046.422&1765.495\\\hline
3&255&63.8884&154.0523&175.7091&161.5036 &226.9247&203.7906\\\hline
4&45&2.478171&13.86321&17.36953&12.75092&20.96838&22.58254\\\hline
5&6& 0.07690074&1.197651 &1.656130&0.9115403&1.731256&2.432916\\\hline
6&2&0.001988605&0.1005918&0.1539398&0.06091039&0.1326196&0.2567596\\\hline
Negative&-&55108.46&54697.39&54659.61& 54678.22& 54754.24&54630.26\\
log-likelihood&&&&&&&\\\hline
Observed&-&4218.796&251.3145&139.4778&246.8612&388.6608&57.37906\\
Chisquare&&&&&&&\\\hline
\end{tabular}
}
\end{center}

\begin{center}{\bf Table:2 Data Set-II}\\
\vspace{0.2 in}
{
\footnotesize 
\begin{tabular}{|c|c|c|c|c|c|c|c|}
\hline
Number&Observed&Fitted& Fitted&Fitted&Fitted& Fitted& Fitted\\
of&&Poisson &Negative&dLindley &dxgamma-I &dxgamma-II&NDOPPE\\
Claims&& &Binomial& & & &$a_0=a_1=a_2=a_3=1$\\\hline
0&370412&369253.7&370786&371140.5&371168.6&372590.2&370651.6\\\hline
1&46545&48637.64&45826.83&45198.52&45126.06& 42689.26&46250.37\\\hline
2&3935&3203.244&4247.933&4463.65&4566.781&5413.719&4027.637\\\hline
3&317&140.6425&350.0120&400.3299&353.7008&504.7849&290.5628\\\hline
4&28&4.631312&27.03706&33.96222&23.35404&39.11710&18.63986\\\hline
5&3&0.1220061&2.004967& 2.778624&1.394727&2.706197&1.103071\\\hline
Negative&-&171373.2&171152.4&171196.2&171204.7&171890.5&171139.3\\
log-likelihood&&&&&&&\\\hline
Observed&-&667.7778&38.32639&122.5626&140.1415&1593.307&14.53022\\
Chisquare&&&&&&&\\\hline
\end{tabular}
}
\end{center}

\begin{center}{\bf Table:3 Data Set-III}\\
\vspace{0.2 in}
{
\footnotesize 
\begin{tabular}{|c|c|c|c|c|c|c|c|}
\hline
Number&Observed&Fitted& Fitted&Fitted&Fitted& Fitted& Fitted\\
of&&Poisson &Negative&dLindley &dxgamma-I &dxgamma-II&NDOPPE\\
Claims&& &Binomial& & & &$a_0=a_1=1$\\\hline
0&7840&7635.46&7718.056&7735.846&7730.347&7797.46&7757.174\\\hline
1&1317&1636.852&1494.167&1463.028&1455.685&1343.856&1428.108\\\hline
2&239&175.4500&216.9461&225.6458&240.053&271.7930&233.7039\\\hline
3&42&12.53737&27.99961&31.65992&31.0723&41.80549&35.85438\\\hline
4&14&0.6719245&3.387843&4.205013&3.457364&5.389682&5.280678\\\hline
5&4&0.02880876&0.3935191&0.5388292&0.3491404&0.6221004&0.7561408\\\hline
6&4&0.001029313&0.04443999&0.06732125&0.03299364&0.06667545&0.1060622\\\hline
7&1&3.152271e-05&0.004916174&0.008254485&0.00297182&0.006779088&0.01464467 \\\hline
Negative&-&5490.781&5388.843&5377.51&5384.057&5376.699&5367.193 \\
log-likelihood&&&&&&&\\\hline
Observed&-&48229.53&651.966&414.0822&900.412&790.0631&248.2751\\
Chisquare&&&&&&&\\\hline
\end{tabular}
}
\end{center}

\newpage
\begin{center}{\bf Table:4 Data Set-IV}\\
\vspace{0.2 in}
{
\footnotesize 
\begin{tabular}{|c|c|c|c|c|c|c|c|}
\hline
Number&Observed&Fitted& Fitted&Fitted&Fitted& Fitted& Fitted\\
of&&Poisson &Negative&dLindley &dxgamma-I &dxgamma-II&NDOPPE\\
Claims&& &Binomial& & & &$a_0=a_1=1$\\\hline
0&3719&3668.600&3675.159&3676.208&3671.561&3676.639&3678.629\\\hline
1&232&317.2765&304.7798&302.4305&306.8567&296.5420&298.3138\\\hline
2&38&13.71973&18.95647&20.08010&20.50728&25.19934&21.50345\\\hline
3&7&0.3955141&1.048036&1.209059&1.029554&1.538505&1.453163\\\hline
4&3&0.008551448&0.05432081&0.06881941&0.0438056&0.07764103&0.09427396\\\hline
5&1&0.0001479133&0.002702885&0.003776502&0.001681779&0.003491711&0.005946133\\\hline
Negative&-&1246.077&1221.197&1217.698&1221.520&1211.224&1213.141\\
log-likelihood&&&&&&&\\\hline
Observed&-&7982.045&598.55&448.2736&860.5252&777.0424&304.7557\\
Chisquare&&&&&&&\\\hline
\end{tabular}
}
\end{center}

\begin{center}{\bf Table:5 Data Set-V}\\
\vspace{0.2 in}
{
\footnotesize 
\begin{tabular}{|c|c|c|c|c|c|c|c|}
\hline
Number&Observed&Fitted& Fitted&Fitted&Fitted& Fitted& Fitted\\
of&&Poisson &Negative&dLindley &dxgamma-I &dxgamma-II&NDOPPE\\
Claims&& &Binomial& & & &$a_0=1,a_1=3.35$\\\hline
0&96978&96688.27&96929.48&96981.95& 96963.48&97185.29&96981.69\\\hline
1&9240&9774.58&9325.676&9228.955&9250.88&8858.42&9227.194\\\hline
2&704&494.0744&672.924&710.0378&715.6196&865.322&711.7372\\\hline
3&43&16.64928&43.16177&49.56523&41.84904&61.1897&49.85381\\\hline
4&9& 0.4207845&2.595396 &3.271535&2.078198&3.582792&3.303031\\\hline
Negative&-&36188.25&36106.19&36104.22&36106.61&36130.61&36104.22\\
log-likelihood&&&&&&&\\\hline
Observed&-&335.9228&18.05162 &10.96488&23.28958&210.8106&10.87023\\
Chisquare&&&&&&&\\\hline
\end{tabular}
}
\end{center}

\begin{center}{\bf Table:6 Data Set-VI}\\
\vspace{0.2 in}
{
\footnotesize 
\begin{tabular}{|c|c|c|c|c|c|c|c|}
\hline
Number&Observed&Fitted& Fitted&Fitted&Fitted& Fitted& Fitted\\
of&&Poisson &Negative&dLindley &dxgamma-I &dxgamma-II&NDOPPE\\
Claims&& &Binomial& & & &$a_0=a_1=1$\\\hline
0&20592&20417.77&20522.27&20544.93&20540.09&20632.49&20572.3\\\hline
1&2651&2947.815&2760.887&2720.243&2718.655&2560.743&2671.185\\\hline
2&297&212.7954&278.5692&292.3791&302.3779&355.8121&308.2993\\\hline
3&41&10.24078&24.98418&28.54859&25.85673&36.56728&33.35894\\\hline
4&7&0.3696281&2.100720&2.637138&1.887721& 3.127832&3.46516\\\hline
5&0&0.01067301&0.1695675&0.2349461&0.1247285&0.2389654&0.3499451\\\hline
6&1&0.0002568194&0.01330708&0.02040511&0.007701431&0.01693577&0.03461957\\\hline
Negative&-&10297.85&10233.72&10228.45&10232.25&10250.72&10224.71\\
log-likelihood&&&&&&&\\\hline
Observed&-&4167.816&100.8537&61.85467&152.6027&176.145 &33.21162\\
Chisquare&&&&&&&\\\hline
\end{tabular}
}
\end{center}
\newpage
\begin{center}{\bf Table:7 Data Set-VII}\\
\vspace{0.2 in}
{
\footnotesize 
\begin{tabular}{|c|c|c|c|c|c|c|c|}
\hline
Number&Observed&Fitted& Fitted&Fitted&Fitted& Fitted& Fitted\\
of&&Poisson &Negative&dLindley &dxgamma-I &dxgamma-II&NDOPPE\\
Claims&& &Binomial& & & &$a_0=1,a_1=a_2=0.01$\\\hline
0&71087&67424.99&67960.82&68052.5&67775.91&68179.48&68472.88\\\hline
1&6744&12363.00&11415.29&11225.99&11366.38&10612.39&10556.37\\\hline
2&2067&1133.436&1438.059&1507.513&1646.199&1910.340&1652.717\\\hline
3&690&69.27539&161.0327&184.0637&185.8067&259.1091&262.0185\\\hline
4&248&3.175573&16.90529&21.26899&17.98504&29.38657&41.94244\\\hline
5&95&0.1164542&1.703736&2.370814&1.578403&2.981471&6.760695\\\hline
6&34&0.003558829&0.1669351&0.2576519&0.1295657&0.2807841&1.09478\\\hline
7&17&9.322066e-05&0.01602279&0.02747802&0.01013466&0.02508084&0.1777548\\\hline
8&4&2.136610e-06&0.001513872&0.002888127&0.0007644298&0.00215381&0.02889426\\\hline
9&3&4.352973e-08&0.0001412684&0.0003000664&5.604773e-05&0.0001794176&0.004696679\\\hline
10&3&7.981583e-10&1.305077e-05&3.088308e-05&4.017203e-06&1.458903e-05&0.0007627534\\\hline
11&2&1.330453e-11&1.195703e-06&3.153703e-06&2.826347e-07&1.163190e-06&0.0001236865\\\hline
Negative&-&44481.26&42392.02&42097.6&42256.75& 41607.45&41257.56 \\
log-likelihood&&&&&&&\\\hline
Observed&-&312143246723&4146376&1619743&16623717&14249230&53609.78\\
Chisquare&&&&&&&\\\hline
\end{tabular}
}
\end{center}
\begin{center}{\bf Table:8 Data Set-VIII}\\
\vspace{0.2 in}
{
\footnotesize 
\begin{tabular}{|c|c|c|c|c|c|c|c|}
\hline
Number&Observed&Fitted& Fitted&Fitted&Fitted& Fitted& Fitted\\
of&&Poisson &Negative&dLindley &dxgamma-I &dxgamma-II&NDOPPE\\
Claims&& &Binomial& & & &$a_0=a_1=1$\\\hline
0&530642&528917.3&529526.7&529646.2&529458.2&530005.8&529832.4\\\hline
1&33495&36734.96&35556.89&35318.90&35557.22&34536.13&34956.52\\\hline
2&2585&1275.679&1790.692&1896.219&1867.180&2302.655&2050.054\\\hline
3&211&29.53329&80.16146&92.25659&72.9354&108.9847&112.713\\\hline
4&25&0.5127949&3.364198&4.241825&2.40716&4.252032&112.713\\\hline
Negative&-&146704.8&146051.2&145973.0&146039.8&146083.2&145879.5\\
log-likelihood&&&&&&&\\\hline
Observed&-&3919.575&826.9477&600.6708&871.6106 &946.4252&348.6467\\
Chisquare&&&&&&&\\\hline
\end{tabular}
}
\end{center}


\section*{Bibliography}
\begin{enumerate}


\item Ahmed, A-H. N. and Afify, A. Z. (2019): A new discrete analog of the continuous Lindley distribution, with reliability applications, Proceedings of 62nd ISI World Statistics Congress, Kuala Lumpur, vol.5, 7-14.(isi2019.org/proceeding/3.CPS/CPS VOL 5)
\item Barlow, R. E. and Proschan, F. (1965): Mathematical theory of reliability. John Wiley $\&$ Sons Inc., New York.
\item Bakouch, H.S., Jazi, M. A. and Nadarajah, S. (2014): A new discrete distribution, Statistics, 48, 200-240.
\item Berm$\acute{u}$dez, L. (2009): A priori ratemaking using bivariate Poisson regression models, Insurance: Mathematics and Economics, 44, 135-141.
\item Bouchahed, L. and Zeghdoudi, H. (2018): A new and unified approach in generalizing the Lindley's distribution with applications, Statistics in Transition, 19(1), 61-74.
\item Boucher, J. P., Denuit, M. and M. Guill$\acute{e}$n, M. (2007): Risk classification for claim counts: A comparative analysis of various
zero-inflated mixed Poisson and hurdle models, North American Actuarial Journal, 11(4), 110-131.
\item Famoye, F. (1993): Restricted generalized Poisson regression model, Communications in Statistics-Theory and Methods, 22, 1335-1354.
\item G$\acute{o}$mez-D$\acute{e}$niz, E. (2010): Another generalization of the geometric distribution, Test, 19, 399-415.
\item G$\acute{o}$mez-D$\acute{e}$niz, E. and Calder$\acute{i}$n-Ojeda, E. (2011): The discrete Lindley distribution: properties and applications, Journal of Statistical Computation and Simulation, 81, 1405-1416.
\item Jazi, M. A., Lai, C. D. and Alamatsaz, M. H. (2010): A discrete inverse Weibull distribution and estimation of its parameters, Statistical Methodology, 7, 121-132.
\item Johnson, N., Kotz, S. and Kemp, A. (1992): Univariate discrete distributions. 2nd edn, John Wiley and Sons, New York.
\item Krishna, H. and Pundir, P. S. (2009): Discrete Burr and discrete Pareto distributions, Statistical Methodology, 6, 177-188.
\item Lawless, J. F. (2003):Statistical models and methods for lifetime data. 2nd ed., John Wiley and Sons, New York.
\item Lindley, D. V. (1958): Fiducial distributions and Bayes' theorem, Journal of Royal Statistical Society, B20, 102-107.
\item Maiti, S. S., Dey, M. and Sarkar(Mondal), S. (2018): Discrete xgamma distributions: properties,estimation and an application to the collective risk model, Journal of Reliability and Statistical Studies, 11(1), 117-132.
\item Nakagawa, T. and Osaki, S. (1975): Discrete Weibull distribution, IEEE Transactions on Reliability, 24, 300-301.
\item Ghitany, M. E., Atieh, B. and Nadarajah, S. (2008): Lindley distribution and its application, Mathematics and Computers in Simulation,78(4), 493-506.
\item Rolski, T., Schmidli, H., Schmidt, V. and Teugel, J. (1999): Stochastic processes for insurance and finance. John Wiley $\&$ Sons.
\item Roy, D. (2002): Discretization of continuous distributions with application to stress-strength reliability, Calcutta Statistical Association Bulletin, 52, 297-313.
\item Roy, D. (2003): The discrete normal distribution, Communications in Statistics-Theory and Methods, 32, 1871-1883.
\item Roy, D. (2004): Discrete Rayleigh distribution, IEEE Transactions on Reliability, 53, 255-260.
\item Sinha, S.K. (1986): Reliability and Life testing. Wiley Eastern Ltd., New Delhi.
\item Stein, W. E. and Dattero, R. (1984): A new discrete Weibull distribution, IEEE Transactions on Reliability, 33, 196-197.
\end{enumerate}
\end{document}